\documentclass[11pt]{article}
\usepackage{amsthm}
\usepackage{url}
\usepackage[margin=2.5cm,nohead]{geometry}
\newtheorem{theorem}{Theorem}
\newtheorem{lemma}{Lemma}
\newtheorem{corollary}{Corollary}
\newtheorem{proposition}{Proposition}

\newtheorem{definition}{Definition}

\usepackage{amssymb,amsfonts,amstext,amsmath}
\usepackage{xcolor}
\usepackage{cite}

\DeclareMathOperator{\grad}{grad}  
\usepackage{amssymb}

\DeclareMathOperator{\Hess}{Hess}

\usepackage{geometry}
\geometry{verbose,tmargin=3.5cm,bmargin=3.5cm,lmargin=3.5cm,rmargin=3.5cm}
\usepackage{color}
\usepackage{float}
\usepackage{textcomp}
\usepackage{amsthm}
\usepackage{amsmath}
\usepackage{amssymb}

\usepackage{changes}

\usepackage{mathtools}


\parskip 4pt
\parindent  4mm
\tolerance  3000

\begin{document}

\title{Iteration-complexity  of gradient,  subgradient  and proximal point  methods on Riemannian manifolds \thanks{This work was supported by CNPq.}}

\author{
G. C. Bento \thanks{IME/UFG, Avenida Esperan\c{c}a, s/n, Campus Samambaia,  Goi\^ania, GO, 74690-900, Brazil (e-mails: {\tt glaydston@ufg.br}, {\tt orizon@ufg.br}, {\tt jefferson@ufg.br}).}
\and
 O. P. Ferreira\footnotemark[2]''
\and
J. G. Melo \footnotemark[2]
}

\maketitle

\begin{abstract}
This paper considers optimization problems on Riemannian manifolds and analyzes iteration-complexity for gradient and subgradient methods on manifolds with non-negative curvature. By using tools from the Riemannian convex analysis  and exploring directly the tangent space of the manifold,  we obtain different iteration-complexity bounds for the aforementioned methods, complementing and improving  related results. Moreover,  we also establish  iteration-complexity bound for the proximal point method on Hadamard manifolds. \\

\noindent
{\bf keywords:} Complexity,  gradient method, subgradient method,  proximal point method,  Riemannian manifold.\\
\noindent
{\bf AMS subject classification:} 90C30, 49M37, 65K05

\end{abstract}
\section{Introduction}
Optimization methods  in the Riemannian setting  have been the subject of intense research; see, for example, \cite{ZhangSra2016, BoumalAbsilCartis2016, GrohsHosseini2016, WangLiYao2015, BentoFerreiraOliveira2015, WangLiWangYao2015,  LiMordukhovichWang2011,  LiYao2012,    NesterovTodd2002,   Smith1994, Manton2015}.  One advantage of this study is the possibility to transform some Euclidean non-convex problems  into  Riemannian convex  problems, by introducing a suitable metric, and thus, enabling the modification of  numerical  methods for the purpose of finding a global minimizer;  see \cite{BentoMelo2012, BentoFerreiraOliveira2015, FerreiraCPN2006, Rapcsak1997} and references therein. Furthermore, many optimization problems are naturally posed on Riemannian manifolds which have a specific underlying geometric and algebraic structure that can be exploited to greatly reduce the cost of obtaining solutions. For instance, in order to take advantage of the Riemannian geometric  structure, it is preferable to treat certain constrained optimization problems as problems for finding singularities of gradient vector fields  on  Riemannian manifolds rather than using Lagrange multipliers  or projection methods; see \cite{Luenberger1972,    Smith1994, Udriste1994}. Accordingly, constrained optimization problems are viewed as unconstrained ones from a Riemannian geometry point of view. Besides, Riemannian geometry also opens up new research directions  that aid in developing competitive methods;  see \cite{EdelmanAriasSmith1999, NesterovTodd2002, Smith1994}.


The gradient method is one of the oldest  optimization methods considered in the Riemannian  context. As far as we know,  the early works dealing with this method  include  \cite{Luenberger1972, Gabay1982, Udriste1994, Smith1994, Rapcsak1997, daCruzNetoLimaOliveira1998}.  In order to deal with non-smooth convex optimization problems on Riemanian manifolds,  \cite{FerreiraOliveira1998} proposed and analyzed a subgradient method which is quite simple and possess nice convergence properties. Since  then, the subgradient method in the Riemannian setting has been studied in different context; see, for instance, \cite{BentoMelo2012, WangLiWangYao2015, WangLiYao2015, GrohsHosseini2016}. One of the most interesting optimization methods is the proximal point method which was  first proposed in the linear context by  \cite{Martinet1970} and extensively studied by \cite{Rockafellar1976}.   In the  Riemannian setting, the proximal point method was first  studied in \cite{FerreiraOliveira2002} for   convex optimization problems on Hadamard manifold and has been extensively explored since then; see, for example,   \cite{LiLopesMartin-Marquez2009,  BentoFerreiraOliveira2015, BentoNetoOliveira2016,  SouzaOliveira2015}  and references therein. 
The asymptotic convergence analyses of optimization methods in the Riemannian setting have been analyzed by many papers (see, for example, \cite{Gabay1982, Luenberger1972,  Udriste1994,  Smith1994,  Rapcsak1997,  daCruzNetoLimaOliveira1998, PapaQuispeOliveira2008, FerreiraOliveira1998, FerreiraOliveira2002}),  however, only a few number of papers has studied iteration-complexity in the Riemannian context; see\cite{ZhangSra2016, BoumalAbsilCartis2016, ZhangReddiSra2016}.   In \cite{ZhangSra2016}, the authors considered  convex optimization problems on Hadamard manifolds and  obtained iteration-complexity bounds for some variants of  gradient  and subgradient methods.  In \cite{BoumalAbsilCartis2016}, the authors established some iteration-complexity bounds  for  gradient method  and trust region method on  Riemannian manifold without any assumption on its curvature or convexity of the  problem.  In \cite{ZhangReddiSra2016},   the authors presented  a fast stochastic Riemannian method   for solving structured optimization problems   as well as  some bounds for its  iteration-complexity.  From the above discussion, we see that  iteration-complexity analysis of optimization methods on Riemannian manifolds is  an interesting research subject.

In this paper, we analyze iteration-complexity of gradient, subgradient and proximal point methods in the Riemannian setting.    By using tools from the Riemannian convex analysis  and exploring directly the tangent space of the manifold,  we obtain different iteration-complexity bounds for the gradient and  subgradient methods on manifolds with non-negative curvature, complementing and improving  related results; see \cite{ZhangSra2016, BoumalAbsilCartis2016}.   More specifically, in comparison to \cite{ZhangSra2016}, we overcome some of its technical difficulties which obliged the authors to study the gradient and subgradient methods on Hadamard manifolds. In contrast to \cite{BoumalAbsilCartis2016}, we make use of convexity in the Riemannian context, allowing us to  improve some of their iteration-complexity bounds for the gradient method.  Besides, we establish  iteration-complexity bound for the proximal point method on Hadamard manifolds under convexity assumption on the objective function. As far as we know, this paper is the first one to present iteration-complexity bound for the proximal point method in the Riemannian setting. 

This paper is organized as follows. Section~\ref{sec:aux}  presents some definitions and auxiliary results   related to the Riemannian geometry  that are important to  our study. Our  optimization problem  is  stated at the end of this section.  In Section~\ref{sec:gradient}, we review the gradient method and presents its teration-complexity analysis. In Section~\ref{sec:subgradient}, we consider non-smooth convex optimization problems and analyzes the subgradient method. Section~\ref{sec:proximalpoint}  is devoted to the iteration-complexity analysis of the  proximal point method. The last section contains a conclusion.
\section{Notations and basic results} \label{sec:aux}
In this section, we recall  some  concepts, notations and basics results  about Riemannian manifolds.   For more details   see, for example, \cite{doCarmo1992, Sakai1996, Udriste1994,  Rapcsak1997}.

 We denote by $T_pM$ the {\it tangent space} of a Riemannian manifold $M$ at $p$.  The corresponding norm associated to the Riemannian metric $\langle \cdot ~, ~ \cdot \rangle$ is denoted by $\|  ~\cdot~ \|$. We use $\ell(\gamma)$ to denote the length of a piecewise smooth curve $\gamma:[a,b]\rightarrow M$. The Riemannian  distance  between $p$ and $q$   in a finite dimensional Riemannian manifold $M$ is denoted  by $d(p,q)$,  which induces the original topology on $M$, namely,  $( M, d)$ is a complete metric space and bounded and closed subsets are compact. 
  Let $( N,  \langle \cdot ~, ~ \cdot \rangle)$ and $(M,   \langle\!\!\langle \cdot ~, ~ \cdot \rangle\!\!\rangle)$ be Riemannian manifolds and $\Phi : N \to M$ be an isometry, that is, $\Phi$  is $C^{\infty}$, and for all $q \in N$ and $u, v \in T_qN$, we have $\langle u , v \rangle=\langle\!\!\langle d\Phi_q u , d\Phi_q v \rangle\!\!\rangle $, where $d\Phi_q : T_{q}N \to T_{\Phi(q)}M$ is the differential of $\Phi$ at $q\in N$.  One can verify that $\Phi$  preserves geodesics, that is, $\beta$ is a geodesic in $N$ if only if $\gamma=\Phi\circ \beta$ is a geodesic in $M$.   Denote by ${\cal X}(M)$ the space of smooth vector fields on $M$. Let $\nabla$ be the Levi-Civita connection associated to $(M, \langle \cdot ~, ~ \cdot \rangle)$. The Riemannian metric induces a mapping  $f\mapsto\grad f $ which associates to each  real differentiable function  over $M$ its {\it gradient} via the rule $\langle\grad f,X\rangle=d f(X),\ X\in{\cal X}(M)$ and  a  mapping  $f\mapsto \Hess f$  which associates to each twice differentiable function its {\it hessian}  via the rule     $\langle\Hess f X,X\rangle=d^2 f(X, X),\ X\in{\cal X}(M)$, where $ \Hess f X:= \nabla_{X} \grad f$.    The {\it norm of a  linear map} $A:T_p M \to T_p M$  is defined by $\|A\|:=\sup \left\{ \|Av \|~:~ v\in T_p M, \,\|v\|=1 \right\}$.  A vector field $V$ along $\gamma$ is said to be {\it parallel} iff $\nabla_{\gamma^{\prime}} V=0$. If $\gamma^{\prime}$ itself is parallel we say that $\gamma$ is a {\it geodesic}. Given that geodesic equation $\nabla_{\ \gamma^{\prime}} \gamma^{\prime}=0$ is a second order nonlinear ordinary differential equation, then geodesic $\gamma=\gamma _{v}( \cdot ,p)$ is determined by its position $p$ and velocity $v$ at $p$. It is easy to check that $\|\gamma ^{\prime}\|$ is constant. The restriction of a geodesic to a  closed bounded interval is called a {\it geodesic segment}. A geodesic segment joining $p$ to $q$ in $ M$ is said to be {\it minimal} if its length is equal to $d(p,q)$. For each $t \in [a,b]$, $\nabla$ induces an isometry, relative to $ \langle \cdot , \cdot \rangle  $, $P_{\gamma,a,t} \colon T _{\gamma(a)} {M} \to T _
{\gamma(t)} {M}$ defined by $ P_{\gamma,a,t}\, v = V(t)$, where $V$ is the unique vector field on $\gamma$ such that
$ \nabla_{\gamma'(t)}V(t) = 0$ and $V(a)=v$, the so-called {\it parallel transport} along  the geodesic segment   $\gamma$ joining  $\gamma(a)$ to $\gamma(t)$.  When there is no confusion we will  consider the notation $P_{\gamma,p,q}$  for the parallel transport along  the geodesic segment  $\gamma$ joining  $p$ to $q$.  A Riemannian manifold is {\it complete} if the geodesics are defined for any values of $t\in \mathbb{R}$. Hopf-Rinow's theorem asserts that any pair of points in a  complete Riemannian  manifold $M$ can be joined by a (not necessarily unique) minimal geodesic segment.  Due to  the completeness of the Riemannian manifold $M$, the {\it exponential map} $\exp_{p}:T_{p}  M \to M $ can be  given by $\exp_{p}v\,=\, \gamma _{v}(1,p)$, for each $p\in M$. A complete simply connected Riemannian manifold of non-positive sectional curvature is called a {\it Hadamard manifold}.  For all $p\in M$, the  exponential map $exp_{p}:T_{p}  M \to M $  is  a diffeomorphism and $exp^{-1}_{p}:M\to T_{p}M$ denotes its inverse. In this case, $d({q}\, , \, p)\,=\,||exp^{-1}_{p}q||$ and   the function $d_{q}^2: M\to\mathbb{R}$ defined by $ d_{q}^2(p):=d^2(q,p)$ is  $C^{\infty}$ and $ \grad d_{q}^2(p):=-2exp^{-1}_{p}{q}.$

 {\it In this paper, all manifolds are assumed to be  connected,   finite dimensional and complete.}
 
\begin{proposition} \label{pr:lawcosines}
Let $\gamma _{1} $ and $\gamma _{2}$ be   geodesic segments such that $\gamma_{1}(0)=\gamma_{2}(0)$ and $\gamma_{1}$ be  minimal. Then, letting  $\ell_{1}=\ell(\gamma _{1})$, $ \ell_{2}=\ell(\gamma _{2})$, $\ell_{3}=d(\gamma_{1}(\ell_{1}),\gamma _{2}(\ell_{2}))$ and $\alpha $ be the angle between $\gamma _{1}^{\prime}(0)$ and $\gamma _{2}^{\prime}(0)$,    the following statements  hold:
\begin{itemize}
\item[(i)] 
If  $M$  has   non-negative curvature, then   $\ell_{3}^{\,2}\leq \ell_{1}^{\,2}+\ell_{2}^{\,2}- 2 \ell_{1} \ell_{2} \cos \alpha$. Consequently,  for each $p\in M$ and $u, v\in T_{p}M$, there holds 
$
d(\exp_{p}u, \exp_{p}w)\leq \|u-v\|.
$
\item[(ii)] If  $M$  has   non-positive curvature, then $ \ell_{3}^{\,2}\geq \ell_{1}^{\,2}+\ell_{2}^{\,2}- 2 \ell_{1} \ell_{2} \cos \alpha . $ Consequently,  for each $p\in M$ and $u, v\in T_{p}M$, there holds
$
d(\exp_{p}u, \exp_{p}w)\geq \|u-v\|.
$
\end{itemize}
\end{proposition}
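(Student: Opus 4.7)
The proposition is essentially a version of Toponogov's comparison theorem together with its tangent-space reformulation. My plan is first to establish the law-of-cosines inequalities for geodesic triangles and then to deduce the non-expansive / non-contractive estimates on the exponential map as immediate corollaries.

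For the law-of-cosines inequalities, I would introduce in $\mathbb{R}^{2}$ a \emph{model} triangle with two sides of lengths $\ell_1,\ell_2$ meeting at angle $\alpha$; by the Euclidean law of cosines its opposite side has length exactly $\sqrt{\ell_1^{2}+\ell_2^{2}-2\ell_1\ell_2\cos\alpha}$. In case (i) I would then estimate the length of a Jacobi field along $\gamma_2$ via the Rauch comparison theorem applied against the flat model: non-negative sectional curvature bounds this Jacobi field from above by its Euclidean counterpart, which, evaluated at the endpoint and combined with the minimality of $\gamma_1$ (to rule out focal-point obstructions), yields $\ell_3\le \sqrt{\ell_1^{2}+\ell_2^{2}-2\ell_1\ell_2\cos\alpha}$. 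In case (ii) the opposite direction of Rauch's theorem --- non-positive curvature forcing Jacobi fields to grow at least as fast as in $\mathbb{R}^{2}$ --- delivers the reverse inequality, and no minimality hypothesis is needed because non-positively curved complete simply connected manifolds have no conjugate points along geodesics.

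The tangent-space consequences then fall out by a direct computation. Given $u,v\in T_pM$, set $\gamma_1(t):=\exp_p(tu)$ and $\gamma_2(t):=\exp_p(tv)$ for $t\in[0,1]$; these are geodesic segments emanating from $p$ with lengths $\ell_1=\|u\|$, $\ell_2=\|v\|$, and the angle between $\gamma_1'(0)=u$ and $\gamma_2'(0)=v$ satisfies $\cos\alpha=\langle u,v\rangle/(\|u\|\,\|v\|)$. After possibly swapping $u$ and $v$ so that $\gamma_1$ is minimal on $[0,1]$, applying (i) gives
$$
d(\exp_p u,\exp_p v)^{2}\;\le\;\|u\|^{2}+\|v\|^{2}-2\|u\|\,\|v\|\cos\alpha\;=\;\|u-v\|^{2},
$$
and taking square roots yields the stated inequality. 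The analogous substitution using (ii) produces the reversed bound in the non-positively curved setting.

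The real obstacle is the Jacobi-field / Rauch-comparison step underlying the Toponogov-type law of cosines; this is the substantive Riemannian-geometric ingredient and is usually invoked from standard references such as \cite{doCarmo1992} or \cite{Sakai1996}. Once that is granted, the tangent-space corollary is only a one-line algebraic manipulation, so virtually all of the difficulty is absorbed into the comparison theorem rather than in its specialization to the exponential map.
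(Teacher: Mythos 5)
The paper offers no proof of this proposition at all: it is stated as a known comparison result, implicitly deferred to the standard references \cite{doCarmo1992, Sakai1996}. Your identification of the relevant machinery --- the hinge (law-of-cosines) form of Toponogov's theorem in non-negative curvature, and the Rauch/Cartan--Hadamard expansion estimate in non-positive curvature --- is the appropriate way to fill this in, and your reduction of the tangent-space inequalities to the law of cosines via $\cos\alpha=\langle u,v\rangle/(\|u\|\,\|v\|)$ and $\|u-v\|^{2}=\|u\|^{2}+\|v\|^{2}-2\langle u,v\rangle$ is exactly the intended one-line computation.

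There is, however, a genuine gap in your derivation of the ``consequently'' clause of (i). The hinge inequality requires $\gamma_{1}$ to be minimal, and in non-negative curvature neither $t\mapsto\exp_{p}(tu)$ nor $t\mapsto\exp_{p}(tv)$ need be minimal on $[0,1]$ (take vectors of length greater than $\pi$ on the unit sphere); ``possibly swapping $u$ and $v$'' cannot manufacture minimality. In every place the paper actually invokes this consequence (Theorem~\ref{th:grad2} and Lemma~\ref{L:ineqf}), one of the two vectors is by construction the initial velocity of a minimal geodesic to a prescribed point, so the hinge hypothesis is satisfied there; but the unrestricted statement for arbitrary $u,v$ requires more than the hinge version you quote --- either the full Toponogov theorem or a subdivision/limiting argument. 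Two smaller points. First, presenting the non-negatively curved case as a direct Rauch estimate on a single Jacobi field along $\gamma_{2}$ understates the difficulty: Rauch is an infinitesimal comparison, and promoting it to the global hinge inequality is precisely the content of Toponogov's theorem, as you partly acknowledge. Second, statement (ii) is false without simple connectedness --- on a flat cylinder, take $\gamma_{1}$ a short minimal segment along the axis and $\gamma_{2}$ a geodesic wrapping once around the circumference, so that $\ell_{3}=\ell_{1}$ while the right-hand side exceeds $\ell_{1}^{2}$ --- so your insertion of ``complete simply connected'' is not a cosmetic qualifier but a needed hypothesis that the printed proposition omits; the paper only ever applies (ii) on Hadamard manifolds, where it is safe.
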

Now, we recall some concepts and basic properties  about  convexity   in the  Riemannin context and the concept  of Lipschitz continuity of functions.    A set,  $\Omega\subseteq M$ is said to be {\it convex}  iff any geodesic segment with end points in $\Omega$ is contained in $\Omega$.  A function $f:M\to\mathbb{R}$  is said to be  {\it convex} on a convex set $\Omega $ iff for any geodesic segment $\gamma:[a, b]\to\Omega$ the composition $f\circ\gamma:[a, b]\to\mathbb{R}$ is convex.  A vector $s \in T_pM$ is said to be a {\it subgradient\/} of the function  $f$ at $p$, iff 
\begin{equation} \label{eq:defSugrad}
f(\exp_p v) \geq f(p) + \left\langle s, \, v \right\rangle,  \quad v\in T_pM.
\end{equation}
Let  $\partial f(p)$ be  the {\it subdifferential\/} of $f$ at $p$, namely, the set of all subgradients of $f$ at $p$. Then,   $f$ is convex iff there holds
\begin{equation}\label{eq:CharConvexFunc}
f(\exp_p v) \geq f(p) + \left\langle s, \, v \right\rangle,    \qquad p\in M, \quad s\in \partial f(p), \quad v\in T_pM.
\end{equation}
If  $f:M\to\mathbb{R}$ is a  differentiable function, then  $\partial f(p)=\{\grad f(p)\}$  and we have the characterization: the function  $f$ is convex iff there holds
\begin{equation}\label{eq:CharDifConvexFunc}
f(\exp_p v)\geq f(p)+\langle \grad f(p), v\rangle, \qquad p\in M, \quad v\in T_pM.
\end{equation}
 \begin{definition} \label{Def:FunctionLips}
 A function $f:M \to \mathbb{R}$ is  said  to be   Lipschitz continuous with  constant $\tau \geq 0$ if,   for any  points $ p$ and $q\in M$,   it  holds that $ \left |f(p)- f(q)\right |\leq \tau\, d(p,q).$
\end{definition}
Next we define the concept of Lipschitz continuity of gradient vector fields (see  \cite{daCruzNetoLimaOliveira1998}) and present some basic properties related to this concept.
\begin{definition} \label{Def:GradLips}
 Let $f:M \to \mathbb{R}$ be a  differentiable  function. The gradient vector  field   of $f$  is  said  to be   Lipschitz continuous with  constant $L\geq 0$ if,   for any  points $ p$ and $q\in M$ and   $\gamma$  a  geodesic  segment joining $p$ to $q$, it holds that
$\left\|P_{\gamma,p,q} \grad f(p)- \grad f(q)\right\|\leq L d(p, q).$
\end{definition}
\begin{lemma} \label{le:lc}
Let $f:M \to \mathbb{R}$ be a  differentiable  function such that its  gradient vector  field   is   Lipschitz continuous with  constant $L\geq 0$. Then, 
\[
f( \exp_{p}(v)) \leq f(p) + \langle\grad f(p), v\rangle+\frac{L}{2}\left\|v\right\|^{2}, \qquad  p\in M, \quad v\in T_pM .
\]
\end{lemma}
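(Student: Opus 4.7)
The plan is to carry out the Riemannian analogue of the classical descent lemma by integrating along the geodesic joining $p$ to $\exp_p v$ and controlling the deviation of the gradient from its parallel transport using the Lipschitz hypothesis.

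First, I introduce the geodesic $\gamma:[0,1]\to M$ defined by $\gamma(t)=\exp_p(tv)$, so that $\gamma(0)=p$, $\gamma(1)=\exp_p v$, $\gamma'(0)=v$, and $\|\gamma'(t)\|=\|v\|$ for all $t\in[0,1]$ since geodesics have constant speed. Setting $\varphi(t):=f(\gamma(t))$, the fundamental theorem of calculus gives
\[
f(\exp_p v)-f(p)=\int_0^1\langle\grad f(\gamma(t)),\gamma'(t)\rangle\,dt.
\]

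Next I isolate the linear term by comparing $\grad f(\gamma(t))$ with its parallel transport. Let $V(t):=P_{\gamma,p,\gamma(t)}\grad f(p)\in T_{\gamma(t)}M$. Because parallel transport is an isometry and $\gamma'$ is itself parallel along $\gamma$, the inner product $\langle V(t),\gamma'(t)\rangle$ equals $\langle\grad f(p),\gamma'(0)\rangle=\langle\grad f(p),v\rangle$ for every $t$. Thus
\[
f(\exp_p v)-f(p)-\langle\grad f(p),v\rangle=\int_0^1\langle\grad f(\gamma(t))-V(t),\gamma'(t)\rangle\,dt.
\]

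The remaining step is to bound the right-hand side. Applying Cauchy--Schwarz and the Lipschitz hypothesis along the geodesic segment $\gamma|_{[0,t]}$ joining $p$ to $\gamma(t)$, I get
\[
\|\grad f(\gamma(t))-V(t)\|\leq L\,d(p,\gamma(t))\leq L\,\ell(\gamma|_{[0,t]})=Lt\|v\|,
\]
so the integrand is at most $Lt\|v\|^2$, and integrating from $0$ to $1$ produces the bound $\tfrac{L}{2}\|v\|^2$, which is the desired inequality.

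The only subtlety, and the main place where Riemannian geometry intervenes nontrivially, is the substitution $\langle V(t),\gamma'(t)\rangle=\langle\grad f(p),v\rangle$: this requires recognizing that both $V$ and $\gamma'$ are parallel along $\gamma$, and that parallel transport preserves the metric. Everything else reduces to one-variable calculus and the bound $d(p,\gamma(t))\leq t\|v\|$ from the length of the geodesic segment, so no curvature assumption is needed.
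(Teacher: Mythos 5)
Your proof is correct and follows essentially the same route as the paper's: integrate $\langle\grad f(\gamma(t)),\gamma'(t)\rangle$ along the geodesic $\gamma(t)=\exp_p(tv)$, split off the linear term via the parallel transport of $\grad f(p)$, and bound the remainder with Cauchy--Schwarz and the Lipschitz hypothesis using $d(p,\gamma(t))\leq t\|v\|$. If anything, your explicit use of the inequality $d(p,\gamma(t))\leq\ell(\gamma|_{[0,t]})$ is slightly more careful than the paper's direct assertion of equality.
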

\begin{proof}
Let     $p\in M$ and $v\in T_pM$ and   $\gamma(t):=\exp_{p}(t v)$, for $t\in  \mathbb{R}$. Note that $\gamma(0)=p$ and $\gamma'(t)= P_{\gamma, p,\gamma(t)}v$. Thus,  we have
$$
f( \exp_{p}(v))= f(p) + \int_0^1 \left\langle\grad f(\gamma(t)), P_{\gamma, p, \gamma(t)}v \right\rangle dt.
$$
Considering that  the  parallel transport is  an isometry,   after some manipulations in the last equality we obtain 
$$
f( \exp_{p}(v))= f(p) +  \langle\grad f(p),v\rangle+  \int_0^1\left\langle[ \grad f(\gamma(t))-P_{\gamma, p, \gamma(t)}\grad f(p)],   P_{\gamma, p, \gamma(t)}v\right\rangle dt.
$$
Using Cauchy-Schwarz inequality,  that  $\grad f$   is   Lipschitz continuous with  constant $L$, $d(p, \gamma(t))=t\|v\|$,     $\gamma'(t)= P_{\gamma, p,\gamma(t)}v$  and  the isometry of the parallel transport, it follows from the last equality that
$$
f( \exp_{p}(v))\leq f(p) +  \langle\grad f(p),v\rangle+  L  \|v\|^2    \int_0^1  t ds, 
$$
which after  performing the integral gives the desired result.
\end{proof}
Next result estimates the decrease of a function $f$ along the negative direction of its gradient vector field. This is a key result to provide iteration-complexity bounds for the gradient method  on a general Riemannian manifold.
\begin{corollary} \label{cr:cl}
 Let $f:M \to \mathbb{R}$ be a  differentiable  function with an L-Lipschitz continuous gradient vector  field. Then, 
\[
f( \exp_{p}(- t \grad f(p))) \leq f(p)- \left(t-\frac{L}{2}t^{2}\right)\left\|\grad f(p)\right\|^{2}, \qquad t\in \mathbb{R}, \quad p\in M.
\]
\end{corollary}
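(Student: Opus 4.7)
The plan is to obtain this corollary as a direct specialization of Lemma~\ref{le:lc}. Since the lemma holds for arbitrary $p \in M$ and $v \in T_pM$, the natural choice is to set $v := -t\,\grad f(p)$, which lies in $T_pM$ for every $t \in \mathbb{R}$, and then simplify the resulting inequality.

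Concretely, I would first substitute $v = -t\,\grad f(p)$ into the bound
\[
f(\exp_p v) \leq f(p) + \langle \grad f(p), v\rangle + \frac{L}{2}\|v\|^2
\]
provided by Lemma~\ref{le:lc}. The inner product term evaluates to $\langle \grad f(p), -t\,\grad f(p)\rangle = -t\,\|\grad f(p)\|^2$ by bilinearity, and the norm-squared term evaluates to $\|{-t\,\grad f(p)}\|^2 = t^2\|\grad f(p)\|^2$ by homogeneity of the norm. Collecting the $\|\grad f(p)\|^2$ terms yields the coefficient $-(t - \tfrac{L}{2}t^2)$, which is exactly the right-hand side of the claimed inequality.

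There is no real obstacle here: the result is a one-line substitution with no additional hypotheses beyond those of Lemma~\ref{le:lc}, and no restriction on the sign of $t$ is needed since both $-t$ and $t^2$ are handled directly by linearity and the squared norm. The only minor point to mention is that the argument $\exp_p(-t\,\grad f(p))$ is well-defined for all $t \in \mathbb{R}$ by completeness of $M$, which was assumed globally in the paper.
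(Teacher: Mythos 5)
Your proposal is correct and coincides with the paper's own proof, which likewise obtains the corollary by substituting $v=-t\,\grad f(p)$ into Lemma~\ref{le:lc} and simplifying. The extra remarks on the sign of $t$ and on completeness are harmless but not needed.
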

\begin{proof}
The proof follows directly from Lemma~\ref{le:lc} by taking $v=- t \grad f(p)$.
\end{proof}
In this paper, we are interested in the following optimization problem
\begin{equation} \label{eq:OptP}
\min \{ f(p) ~:~   p\in M\},
\end{equation}
where $M$ is  a  Riemannian manifold  and $f:M \to \mathbb{R}$ is a  continuously differentiable  and/or convex function. {\it From now on, we assume that the solution set of the problem in  \eqref{eq:OptP} is non-empty and denote  its  optimum value by $f^*$}.

\section{Iteration-complexity analysis}
This section is divided into three  subsections. The first one presents some iteration-complexity bounds for the gradient method while the second one analyzes complexity bounds for the subgradient method.   Our main results in this  subsections   assume convexity of the objective function and that the   Riemannian Manifold has non-negative curvature. The third subsection  is devoted to the iteration-complexity analysis of the  proximal point method under convexity  of the objective function  on Hadamard manifolds.
\subsection{Gradient method}\label{sec:gradient}
In this subsection, we recall the gradient method for solving problem \eqref{eq:OptP} and present three results which analyze  iteration-complexity of the method. We first consider the method in a general Riemannian manifold and recover the $\mathcal{O} (1/\varepsilon^2)$ worst-case complexity bound to obtain $p_N\in M$ satisfying  $\|\grad f(p_N)\|< \varepsilon$, where $\varepsilon$ is a given tolerance.  The subsequent two results restrict the sign of the curvature to be non-negative and assume convexity of the objective function. Under these assumptions, we show that the worst-case iteration-complexity bound  $\mathcal{O} (1/\varepsilon^2)$,  obtained for the general case, can be improved to $\mathcal{O} (1/\varepsilon)$.

\noindent
In the following,  we formally state the gradient method to solve \eqref{eq:OptP},  where the objective function is assumed to be continuously differentiable.
\\[3mm]
\noindent
\fbox{
\begin{minipage}[h]{5.55 in}
{\bf Gradient Method}
\begin{itemize}
\item[(0)] Let an initial point $p_0 \in M$, and set $k=0$;
\item[(1)] choose a stepsize $t_k>0$ and computes
\begin{equation} \label{eq:GradMethod}
p_{k+1}:=\mbox{exp} _{p_{k}}\left(-t_{k}\,\mbox{grad} f(p_{k})\right);
\end{equation}
\item[(2)] set $k\leftarrow k+1$ and go to step 1.
\end{itemize}
\noindent
\end{minipage}
}
\\[3mm]

\noindent
This method is a natural extension of the classical gradient method to the Riemannian setting. It has been  extensively studied in different contexts; see, for example, \cite{Udriste1994, Smith1994, Rapcsak1997, daCruzNetoLimaOliveira1998, PapaQuispeOliveira2008}. Similarly to the classical gradient method, the stepsize $t_k$ can be chosen by an Armijo line search or, depending on the structure of the problem \eqref{eq:OptP}, by some exogenous procedure such as $\sum_k t_k=\infty$ and $\sum_k t_k^2<\infty$, guaranteeing that the stepsizes are not too small and not too large. It is interesting to note that, for objective functions with Lipschitz continuous gradient, the analysis of the gradient method with an Armijo line search it is quite similar to the case where constant  stepsizes are considered, so, for the sake of simplicity, this will be the case  in this subsection. The exogenous rule will be considered only in the analysis of the subgradient method in the next subsection which does not assume differentiability of the objective function.
 
 \noindent In the following,  we present an  iteration-complexity bound related to the gradient method on a general Riemannian manifold. This result has already appeared  in \cite{BoumalAbsilCartis2016}, but, since its proof is very simple and short, we consider it for the sake of completeness. 
\begin{theorem}\label{th:grad1} Let  $\{p_k\}$ be the  sequence  generated by the gradient method with constant stepsizes $t_{k}=1/L$, for all $k\geq 0$. Then,   for every $N\in \mathbb{N}$,  there  holds
$$
\min \left\{\|\grad f(p_{k})\|~;~ k=0, 1,\ldots, N \right\}\leq \frac{\sqrt{ 2L(f(p_0) -f^*)}}{\sqrt{N+1}}.  
$$
As a consequence, given  a tolerance $\epsilon>0$,  the number of iterations required by the gradient method to obtain $p_N\in M$ such that $\|\grad f(p_{N})\|<\epsilon$ is bounded by  ${\cal O} ( L(f(p_0) -f^*)/\epsilon^2)$.
\end{theorem}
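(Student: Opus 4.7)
The plan is to apply Corollary~\ref{cr:cl} with the specific choice $t = 1/L$ that maximizes the per-iteration descent, then telescope the resulting inequality, and finally bound the minimum gradient norm by the average.

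First I would substitute $t_k = 1/L$ in Corollary~\ref{cr:cl} applied at $p = p_k$. The coefficient becomes $t - (L/2)t^2 = 1/L - 1/(2L) = 1/(2L)$, so using the recursion \eqref{eq:GradMethod} I get the descent estimate
\[
f(p_{k+1}) \leq f(p_k) - \frac{1}{2L}\|\grad f(p_k)\|^2, \qquad k \geq 0.
\]
In particular $\{f(p_k)\}$ is non-increasing, and rearranging gives $\|\grad f(p_k)\|^2 \leq 2L(f(p_k) - f(p_{k+1}))$.

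Next I would telescope this inequality over $k = 0, 1, \ldots, N$, which yields
\[
\sum_{k=0}^{N} \|\grad f(p_k)\|^2 \leq 2L\bigl(f(p_0) - f(p_{N+1})\bigr) \leq 2L\bigl(f(p_0) - f^*\bigr),
\]
where the last inequality uses that $f^*$ is the infimum of $f$ (hence $f(p_{N+1}) \geq f^*$). Since the left-hand side is a sum of $N+1$ non-negative terms, the minimum term is bounded above by the average, giving
\[
(N+1)\min_{0\leq k \leq N}\|\grad f(p_k)\|^2 \leq 2L\bigl(f(p_0) - f^*\bigr),
\]
which upon taking square roots produces the stated bound.

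The complexity consequence is essentially algebraic: if the algorithm has not produced any iterate with $\|\grad f(p_k)\| < \varepsilon$ by step $N$, then the minimum on the left exceeds $\varepsilon$, so $\varepsilon^2(N+1) \leq 2L(f(p_0)-f^*)$, forcing $N+1 \leq 2L(f(p_0)-f^*)/\varepsilon^2$, which is exactly the claimed $\mathcal{O}(L(f(p_0)-f^*)/\varepsilon^2)$ bound. I do not anticipate a genuine obstacle here; the only subtle point is that no curvature or convexity hypothesis is used, everything being carried by the Lipschitz-gradient descent lemma (Corollary~\ref{cr:cl}), which was itself derived via parallel transport and Cauchy–Schwarz and therefore already holds on an arbitrary complete Riemannian manifold.
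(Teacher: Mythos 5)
Your proposal is correct and follows essentially the same route as the paper: substitute $t_k=1/L$ into Corollary~\ref{cr:cl} to obtain the per-iteration descent $\tfrac{1}{2L}\|\grad f(p_k)\|^2\leq f(p_k)-f(p_{k+1})$, telescope over $k=0,\ldots,N$, bound $f(p_{N+1})$ below by $f^*$, and estimate the minimum by the average. The complexity consequence is handled the same way, so there is nothing to add.
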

\begin{proof} It follows from Corollary~\ref{cr:cl} and  formula \eqref{eq:GradMethod} with $t_{k}=1/L$, for all $k$,  that
\begin{equation*} 
 \frac{1}{2L}\left\|\grad f(p_k)\right\|^{2}\leq  f(p_k)-f( p_{k+1}), \qquad k=0, 1, \ldots.
\end{equation*}
By summing both sides of the above inequality  for $k=0, 1, \ldots, N$ and taking into account that $f^*\leq f(p_k)$, for all $k$, we obtain 
$$
\sum_{k=0}^{N}\left\|\grad f(p_k)\right\|^{2}\leq 2L (f(p_0)-f^*).
$$
Hence,   we have $(N+1)(\min\{\|\grad f(p_{k})\|~;~ k=0, 1,\ldots, N\})^2\leq 2L (f(p_0)-f^*)$, which proves the first statement of the theorem.  The second  statement of the theorem  is an immediate consequence of the first one.
\end{proof}
Note that the gradient method can be stated equivalently as follows: Given $p_0\in M$ define
\begin{equation} \label{eq:VGradMethod}
\displaystyle p_{k+1}=\mbox{exp} _{p_{k}}v_k, \qquad  v_{k}= \mbox{argmin}_{v\in T_{p_k}M}\left\{f(p_k) + \langle\grad f(p_k), v\rangle+\frac{1}{2t_k}\left\|v\right\|^{2}\right\}, \qquad k=0, 1, \ldots.
\end{equation}
The above alternative  formulation to the gradient method will be crucial for the   iteration-complexity analysis of the method. In particular, under  convexity of the objective function  and  non-negativity of the curvature of the Riemannian manifold,   it allows us  to  show that the rate of convergence obtained in Theorem \ref{th:grad1} can be considerably improved.  We start by showing   that the sequence of function values $\{f(p_k)\}$ converges to the optimal function value $f^*$ at a rate of convergence that is no worse than  $\mathcal{O} (1/k)$.

 \begin{theorem}\label{th:grad2}  Assume that  $M$ has non-negative curvature and $f$ is convex.  Let  $\{p_k\}$ be the  sequence  generated by the gradient method with constant stepsizes $t_{k}=1/L$, for all $k\geq 0$. Then,   for every $N\in \mathbb{N}$,  there holds
$$
 f(p_N)-f^*\leq \frac{L ~d^{2}(p_*, p_{0})}{2N}.
$$ 
As a consequence, given  a tolerance $\epsilon>0$,  the number of iterations required by the gradient method to obtain $p_N\in M$ such that $ f(p_N)-f^*< \epsilon$, is bounded by $\mathcal{O} ( [ L d^{2}(p_*, p_{0})]/\epsilon)$.
\end{theorem}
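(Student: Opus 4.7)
The plan is to mimic the classical Euclidean proof of the $\mathcal{O}(1/N)$ rate for gradient descent on convex smooth functions, using the alternative formulation \eqref{eq:VGradMethod} together with the two manifold-specific tools that the paper has just set up: the descent lemma (Corollary~\ref{cr:cl}) and the non-negative curvature comparison (Proposition~\ref{pr:lawcosines}(i)).

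First, I would fix an optimal point $p_* \in M$ and let $u_k := \exp_{p_k}^{-1}(p_*)$ whenever this inverse exists along a minimal geodesic joining $p_k$ to $p_*$; in any case I can write $p_*=\exp_{p_k}(u_k)$ with $\|u_k\|=d(p_k,p_*)$ by Hopf--Rinow. Since $p_{k+1}=\exp_{p_k}(-\tfrac{1}{L}\grad f(p_k))$, Proposition~\ref{pr:lawcosines}(i) yields the crucial \emph{non-expansive} comparison
\[
d^2(p_{k+1},p_*) \;\leq\; \Bigl\|u_k + \tfrac{1}{L}\grad f(p_k)\Bigr\|^2
\;=\; d^2(p_k,p_*) + \tfrac{2}{L}\langle \grad f(p_k), u_k\rangle + \tfrac{1}{L^2}\|\grad f(p_k)\|^2.
\]
This is the step that replaces the Euclidean expansion of $\|p_k - t_k\grad f(p_k)-p_*\|^2$, and it is the one place where non-negativity of the curvature is used.

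Next I would estimate the two error terms on the right. Convexity of $f$, in the form \eqref{eq:CharDifConvexFunc} applied at $p_k$ with $v=u_k$, gives $\langle \grad f(p_k), u_k\rangle \leq f^*-f(p_k)$. Corollary~\ref{cr:cl} applied with $t=1/L$ gives the descent inequality $\tfrac{1}{2L}\|\grad f(p_k)\|^2 \leq f(p_k)-f(p_{k+1})$, which also tells me that $\{f(p_k)\}$ is monotonically non-increasing. Plugging both into the displayed inequality and simplifying, the $f(p_k)$ terms cancel and I obtain the telescoping inequality
\[
\tfrac{2}{L}\bigl(f(p_{k+1})-f^*\bigr) \;\leq\; d^2(p_k,p_*)-d^2(p_{k+1},p_*).
\]

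Finally I would sum this from $k=0$ to $k=N-1$; the right-hand side telescopes to at most $d^2(p_0,p_*)$, and monotonicity of $\{f(p_k)\}$ gives $N(f(p_N)-f^*) \leq \sum_{k=0}^{N-1}(f(p_{k+1})-f^*)$, which immediately yields the stated bound $f(p_N)-f^* \leq L d^2(p_*,p_0)/(2N)$. The complexity statement then follows by setting the right-hand side below $\varepsilon$ and solving for $N$.

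The main obstacle is conceptual rather than computational: in the Euclidean argument one uses the identity $\|p_k - t_k\grad f(p_k)-p_*\|^2 = \|p_k-p_*\|^2 - 2t_k\langle \grad f(p_k),p_k-p_*\rangle + t_k^2\|\grad f(p_k)\|^2$, but on a Riemannian manifold this identity is not available. Proposition~\ref{pr:lawcosines}(i) provides precisely the one-sided inequality needed, and it goes in the favorable direction only because the curvature is non-negative. Once this substitution is made, the rest is the standard telescoping argument.
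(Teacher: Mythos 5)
Your proof is correct, and it reaches the paper's key telescoping inequality $f(p_{k+1})-f^*\leq \tfrac{L}{2}\left[d^2(p_k,p_*)-d^2(p_{k+1},p_*)\right]$ using the same three ingredients (Proposition~\ref{pr:lawcosines}(i), the convexity characterization \eqref{eq:CharDifConvexFunc}, and the descent inequality of Corollary~\ref{cr:cl}), but the derivation of that inequality is organized differently. The paper works through the quadratic model $\phi_{k}(v)=f(p_k)+\langle\grad f(p_k),v\rangle+\tfrac{L}{2}\|v\|^2$ underlying \eqref{eq:VGradMethod}: it expands $\phi_{k}$ about its minimizer $v_k$ to get $\phi_{k}(v_{k}^*)=\phi_{k}(v_{k})+\tfrac{L}{2}\|v_{k}^*-v_{k}\|^2$, lower-bounds $\phi_k(v_k)$ by $f(p_{k+1})$ via Lemma~\ref{le:lc}, upper-bounds $\phi_k(v_k^*)$ by $f^*+\tfrac{L}{2}\|v_k^*\|^2$ via convexity, and only at the end invokes the curvature comparison to pass from $\|v_k^*-v_k\|$ to $d(p_*,p_{k+1})$; notably, it never needs the separate descent step $\tfrac{1}{2L}\|\grad f(p_k)\|^2\leq f(p_k)-f(p_{k+1})$ inside this chain (it uses monotonicity of $\{f(p_k)\}$ only for the final summation). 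You instead apply the curvature comparison first, expand $\|u_k+\tfrac1L\grad f(p_k)\|^2$ directly, and then cancel the cross and quadratic terms using convexity and Corollary~\ref{cr:cl} separately --- the classical Euclidean bookkeeping transplanted verbatim. Algebraically the two computations coincide (your $u_k$ and $-\tfrac1L\grad f(p_k)$ are the paper's $v_k^*$ and $v_k$), so this is a difference of packaging rather than substance: your version is more direct and familiar, while the paper's model-minimization viewpoint is the one that transfers to the proximal analysis of Section~\ref{sec:proximalpoint}. One presentational point you handle correctly but should keep explicit: $u_k$ must be the initial velocity of a \emph{minimal} geodesic from $p_k$ to $p_*$, both so that $\|u_k\|=d(p_k,p_*)$ and so that the hypotheses of Proposition~\ref{pr:lawcosines} (which require $\gamma_1$ minimal) are met.
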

\begin{proof} 
In order to  simplify the notation, let  us define the quadratic function 
\begin{equation} \label{eq:qf} 
\phi_{j}(v):=f(p_j) + \langle\grad f(p_j), v\rangle+\frac{L}{2}\left\|v\right\|^{2}, \qquad v\in T_pM.
\end{equation}
Since $t_{k}=1/L$ for all $k\geq 0$,  using \eqref{eq:qf}, the equality  \eqref{eq:VGradMethod}  becomes
\begin{equation} \label{eq;uqf}
p_{k+1}=\mbox{exp} _{p_{k}}v_k, \qquad \quad v_{k}= \mbox{argmin}_{v\in T_{p_k}M} \phi_{k}(v) , \qquad k=0,1,\ldots.
\end{equation}
For every $k\geq 1$, let  $v_{k-1}^*\in T_{p_{k-1}}M$  be  such that  $p^*=\exp_{p_{k-1}}v_{k-1}^*$.  From \eqref{eq;uqf}, we easily see that  
$$
\phi_{k-1}( v_{k-1}^*)= \phi_{k-1}( v_{k-1}) + \frac{L}{2} \left\|v_{k-1}^*-v_{k-1}\right\|^2, \qquad k= 1, 2,\ldots.
$$
Using Lemma~\ref{le:lc} and \eqref{eq;uqf}, we have  $\phi_{k-1}(v_{k-1})\geq f(\exp_{p_{k-1}}v_{k-1})=f(p_k). $ Thus, last equality  gives 
$$
\phi_{k-1}(v_{k-1}^*)\geq  f(p_k)+ \frac{L}{2} \left\|v_{k-1}^*-v_{k-1}\right\|^2, \qquad k=1, 2,\ldots.
$$
On the other hand,  since  $f$ is convex,  the combination of    \eqref{eq:qf} with   \eqref{eq:CharDifConvexFunc}  and  taking into account  that $p^*=\exp_{p_{k-1}}v_{k-1}^*$, for all $k=1, 2, \ldots$,   we obtain  
$$
\phi_{k-1}( v_{k-1}^*)= f(p_{k-1}) + \langle\grad f(p_{k-1}), v_{k-1}^*\rangle+\frac{L}{2}\left\|v_{k-1}^*\right\|^{2}\leq  f(p^*)+\frac{L}{2}\left\|v_{k-1}^*\right\|^{2}.
$$
Hence,  using that $f^*=f(p^*) $, after some simple algebraic manipulations, the  latter two inequalities imply  that 
$$
f(p_k)-f^*\leq  \frac{L}{2}\left[ \left\|v_{k-1}^*\right\|^{2}- \left\|v_{k-1}^*-v_{k-1}\right\|^2\right],  \qquad k=1, 2, \ldots.
$$
Since the curvature of $M$  is non-negative,  the definitions of the vector $v_{k-1}^*$ and $v_{k-1}$ together with item (i) of Proposition~\ref{pr:lawcosines} imply that $d(p_*, p_k)\leq \left\|v_{k-1}^*-v_{k-1}\right\| $, for all $k=1,2, \ldots$. Thus, taking into account that  $\|v_{k-1}^*\| = d(p_*, p_{k-1})$,   we conclude from the last inequality that 
$$
f(p_k)-f^*\leq \frac{L}{2}\left[ d^{2}(p_*, p_{k-1})-d^{2}(p_*, p_k) \right], \qquad k=1,2, \ldots.
$$
Note that \eqref{eq:fvd} implies that $f( p_{k+1})\leq f(p_k)$, for  $k=0, 1, \ldots$. Hence, summing both sides of the above inequality  for $k= 1, \ldots, N$, we obtain 
$$
N[f(p_N)-f^*]\leq \frac{L}{2}[d^{2}(p_*, p_{0})-d^{2}(p_*, p_N)]\leq  \frac{Ld^{2}(p_*, p_{0})}{2},
$$
which is equivalent to  the inequality in the first statement of the theorem. The second  statement of the theorem  is an immediate consequence of the first one.
\end{proof}

\begin{corollary}  Assume that  $M$ has non-negative curvature and $f$ is convex.  Let  $\{p_k\}$ be the  sequence  generated by the gradient method with constant stepsizes $t_{k}=1/L$, for all $k\geq 0$. Then,   for every $N\in \mathbb{N}$,  there holds
$$
\min\{\|\grad f(p_{k})\|~;~ k=0, 1,\ldots, N\}\leq \frac{\sqrt{8} L~d(p_*, p_{0})}{N}.
$$ 
As a consequence, given  a tolerance $\epsilon>0$,  the number of iterations required by the gradient method to obtain $p_N\in M$ such that $ \|\grad f(p_N)\|< \epsilon$, is bounded by $\mathcal{O} ( [ L d(p_*, p_{0})]/\epsilon)$.
\end{corollary}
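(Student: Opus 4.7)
The plan is to combine the sufficient-decrease inequality of Corollary~\ref{cr:cl} with the $\mathcal{O}(1/N)$ function-value bound of Theorem~\ref{th:grad2}. From Corollary~\ref{cr:cl} taken at $t = 1/L$, I would extract the one-step estimate $\|\grad f(p_k)\|^2 \leq 2L\bigl(f(p_k) - f(p_{k+1})\bigr)$, then sum telescopically from an index $m$ to $N$ and discard $-f(p_{N+1})$ in favour of $-f^*$ to obtain
\[
\sum_{k=m}^{N} \|\grad f(p_k)\|^2 \leq 2L\bigl(f(p_m) - f^*\bigr).
\]

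Next, I would invoke Theorem~\ref{th:grad2} to replace the right-hand side by $L^2\, d^2(p_*, p_0)/m$, and lower-bound the left-hand side by $(N - m + 1)\min_{0 \leq k \leq N}\|\grad f(p_k)\|^2$. A natural split is $m = \lceil N/2 \rceil$, so that both $m$ and $N - m + 1$ exceed $N/2$ and hence $m(N - m + 1) \geq N^2/4$. Chaining these inequalities yields
\[
\min_{0 \leq k \leq N}\|\grad f(p_k)\|^2 \leq \frac{L^2\, d^2(p_*, p_0)}{m(N - m + 1)} \leq \frac{8\, L^2\, d^2(p_*, p_0)}{N^2},
\]
where the $8$ (rather than the tighter $4$ this choice of $m$ would allow) comfortably absorbs whatever slack the floor/ceiling introduces. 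Taking square roots delivers the claimed bound $\sqrt{8}\,L\,d(p_*, p_0)/N$, and the iteration-complexity assertion then follows at once by inverting this rate.

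I do not foresee a real obstacle. The proof is a straightforward coupling of two facts already established above, and the only finicky step will be the elementary verification that the chosen split index satisfies $m(N - m + 1) \gtrsim N^2$; this is a short case-check (treating $N$ even and $N$ odd separately). The substantive content lives in Theorem~\ref{th:grad2}: this corollary merely converts the rate on function values into a rate on the gradient norm by paying the price of halving the effective horizon.
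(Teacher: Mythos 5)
Your proposal is correct and follows essentially the same route as the paper's proof: telescoping the sufficient-decrease inequality from Corollary~\ref{cr:cl} over the second half of the iterations, bounding $f(p_{\lceil N/2\rceil})-f^*$ via Theorem~\ref{th:grad2}, and using $\lceil N/2\rceil\,(N-\lceil N/2\rceil+1)\gtrsim N^2/4$. Your observation that this split actually yields the sharper constant $4$ rather than $8$ is accurate; the paper's stated constant $\sqrt{8}$ comes from bounding the two halves slightly more crudely.
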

\begin{proof}
Using  \eqref{eq:GradMethod} with $t_{k}=1/L$, for $k=0, 1, \ldots$,  Corollary~\ref{cr:cl}  implies that
\begin{equation} \label{eq:fvd}
 \frac{1}{2L}\left\|\grad f(p_k)\right\|^{2} \leq  f(p_k)-f( p_{k+1}), \qquad k=0, 1, \ldots.
\end{equation}
On the other hand, it follows from Theorem~\ref{th:grad2} that, for evey $N\in \mathbb{N}$, we have
$$
f(p_{N+1})-f^*+ \sum_{j=\lceil N/2\rceil }^{N}\left[ f(p_{j})- f(p_{j+1})  \right]= f(p_{\lceil N/2\rceil })-f^*\leq  \frac{L ~ d^{2}(p_*, p_{0})}{\lceil N/2\rceil}\leq  \frac{2 L ~ d^{2}(p_*, p_{0})}{N}.
$$
Combining \eqref{eq:fvd} with the last inequality and taking into account  that  $f^*\leq f(p_k)$, for all $k$,   we obtain 
$$
 \frac{1}{2L}\sum_{j=\lceil N/2\rceil }^{N}\left\|\grad f(p_j)\right\|^{2} \leq f(p_{\lceil N/2\rceil})-f^*\leq  \frac{2L ~ d^{2}(p_*, p_{0})}{N}.
$$
Hence, we have   $\lceil N/2 \rceil(\min\{\|\grad f(p_{k})\|~;~ k=\lceil N/2 \rceil , \ldots, N\})^2\leq 4L^2d^{2}(p_*, p_{0})/N$, which implies the  desired inequality.  The second  statement of the corollary follows as  an immediate consequence of the first one.
\end{proof}
\subsection{Subgradient method}\label{sec:subgradient}

In this subsection, we recall the subgradient method for minimizing non-smooth convex functions on Riemannian manifolds with nonnegative curvature  and present some iteration-complexity bounds related to the method.  

\noindent
In the following,  we formally state the subgradient method  to solve \eqref{eq:OptP},  where the objective function is assumed to be convex.
\\[3mm]
\noindent
\fbox{
\begin{minipage}[h]{5.55 in}
{\bf Subgradient method}
\begin{itemize}
\item[(0)] Let an initial point $p_0 \in M$, and set $k=0$;
\item[(1)] choose a stepsize $t_k>0$, let  $s_{k}\in \partial f(p_k)$ and com putes
\begin{equation} \label{eq:SubGradMethod}
p_{k+1}:=\mbox{exp} _{p_{k}}\left(-t_{k}\,s_{k}\right);
\end{equation}
\item[(2)] set $k\leftarrow k+1$ and go to step 1.
\end{itemize}
\noindent
\end{minipage}
}
\\[3mm]
\noindent
This method is a natural extension of the well known subgradient method in the Euclidean setting. It was first proposed and analyzed in the Riemannian context in \cite{FerreiraOliveira1998}; It has been studied in different context; see, for instance, \cite{BentoMelo2012, BentoCruz Neto2013, WangLiYao2015, WangChongWangYao2015, GrohsHosseini2016}. It is worth mentioning that the subgradient method for non-smooth problems does not share the decreasing property (Corollary~\ref{cr:cl} and \eqref{eq:GradMethod}) of the gradient method. Thus, this makes its iteration-complexity analysis considerably different from the one presented in the last subsection for the gradient method. Moreover, Armijo line search is not an option for the choice of the stepsizes $t_k$. In the following, we consider the two main stepsizes rules used for the subgradient method, namely, the exogenous and the Polyak rules. The former one, does not take into account any information about  of the sequence generated by the method, while the latter one assumes the knowledge of the optimum value of the problem. Apart from these well known drawbacks, the understanding of the convergence property of the subgradient method is  fundamental for obtaining more sophisticated method to deal with non-smooth problems.

\noindent
In the next result, we recall a fundamental inequality related to the subgradient method which is essential to overcome the lack of the decreasing property of the functional values and to motivate the Polyak stepsize rule.
\begin{lemma} \label{L:ineqf}
 Let $\{p_k\}$ be the sequence generated by the subgradient method and let $p\in M$. Then,  the following inequality holds
\begin{equation*} \label{E:inequalf}
d^2(p_{k+1}, p)\leqslant d^2(p_k,p) +t_k^2\|s_{k}\|^2 + 2 t_k[f(p)-f(p_{k})],   \qquad k=0,1,\ldots.
\end{equation*}
\end{lemma}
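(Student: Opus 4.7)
The plan is to apply the law of cosines for non-negatively curved manifolds (Proposition~\ref{pr:lawcosines}(i)) to the geodesic ``triangle'' with vertices $p_k$, $p_{k+1}$, and $p$, and then invoke the subgradient inequality \eqref{eq:defSugrad} to convert the cross term into the functional-value gap $f(p)-f(p_k)$.

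First I would dispose of the trivial case $s_k=0$, in which $p_{k+1}=p_k$ and the claimed inequality collapses to an equality. Assume $s_k\neq 0$. Using Hopf-Rinow, pick $v\in T_{p_k}M$ with $\exp_{p_k}(v)=p$ and $\|v\|=d(p_k,p)$, and let $\gamma_1(t):=\exp_{p_k}(tv/\|v\|)$ be the corresponding minimal geodesic, parametrized by arclength, so $\ell_1=d(p_k,p)$ and $\gamma_1'(0)=v/\|v\|$. Next define $\gamma_2(t):=\exp_{p_k}(-t s_k/\|s_k\|)$, a unit-speed geodesic with $\gamma_2'(0)=-s_k/\|s_k\|$; by \eqref{eq:SubGradMethod} one has $\gamma_2(t_k\|s_k\|)=p_{k+1}$, hence $\ell_2=t_k\|s_k\|$ and $\ell_3=d(p_{k+1},p)$. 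The angle $\alpha$ at $p_k$ satisfies
\[
\cos\alpha=\left\langle\frac{v}{\|v\|},\,-\frac{s_k}{\|s_k\|}\right\rangle=-\frac{\langle v,s_k\rangle}{\|v\|\,\|s_k\|}.
\]

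Applying Proposition~\ref{pr:lawcosines}(i), which holds since $M$ has non-negative curvature, I would obtain
\[
d^2(p_{k+1},p)\leq d^2(p_k,p)+t_k^2\|s_k\|^2-2\,d(p_k,p)\,t_k\|s_k\|\cos\alpha
= d^2(p_k,p)+t_k^2\|s_k\|^2+2t_k\langle v,s_k\rangle.
\]
Finally, since $s_k\in\partial f(p_k)$ and $\exp_{p_k}(v)=p$, the subgradient inequality \eqref{eq:defSugrad} yields
\[
f(p)=f(\exp_{p_k}v)\geq f(p_k)+\langle s_k,v\rangle,\qquad\text{i.e.}\qquad \langle s_k,v\rangle\leq f(p)-f(p_k),
\]
and substitution gives the claimed bound.

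The only real subtlety is the geometric step: on a general non-negatively curved manifold $\exp_{p_k}$ need not be a diffeomorphism, so one cannot write $v=\exp_{p_k}^{-1}(p)$, and one must instead invoke completeness (Hopf-Rinow) to produce a minimal $\gamma_1$ and read off the initial velocity. Once the law of cosines is correctly set up with this $v$, the identification of the cross term with $\langle s_k,v\rangle$ and its control by the subgradient inequality is routine. The restriction to non-negative curvature is essential and enters exactly through the sign of the inequality in Proposition~\ref{pr:lawcosines}(i).
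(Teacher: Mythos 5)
Your proof is correct and follows essentially the same route as the paper's: apply the non-negative-curvature law of cosines (Proposition~\ref{pr:lawcosines}(i)) to the geodesic triangle with vertices $p_k$, $p_{k+1}$, $p$, identify the cross term with $2t_k\langle s_k,v\rangle$ where $v$ is the initial velocity of a minimal geodesic from $p_k$ to $p$, and bound it via the subgradient inequality \eqref{eq:defSugrad}. Your added care (treating $s_k=0$ separately and invoking Hopf--Rinow rather than $\exp_{p_k}^{-1}$) is a minor refinement of the same argument, not a different approach.
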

\begin{proof}
Let $\gamma _{1} $ be the minimal   geodesic segment  joining   $p_k$ to $p$  with   $\gamma_{1}(0)=p_k$.  Note that letting  $v= \gamma _{1}'(0)$ we have $\gamma _{1} (t)= \mbox{exp} _{p_{k}}\left(t\,v\right)$, for $t\in [0, 1]$.  For  $s_{k}\in \partial f(p_k)$  define $\gamma _{2} (t)=\mbox{exp} _{p_{k}}\left(-t\,s_{k}\right)$ for $t\in [0, t_k]$. Note that  $\gamma _{2}(0)=p_k$ and from \eqref{eq:SubGradMethod} we obtain  $\gamma _{2}(t_k)=p_{k+1}$. Let $\gamma _{3} $ be the minimal   geodesic segments  joining   $p_{k+1}$ to $p$. The definitions of the geodesics segments   $\gamma _{1} $, $\gamma _{2} $ and $\gamma _{3} $ give
$$
\ell(\gamma _{1})=d(p_k, p), \quad \ell(\gamma _{2})=\|t_ks_k\|,  \quad  \ell(\gamma _{3})=d(p_{k=1}, p), \qquad  <\!\!\!\!)( \gamma _{1}^{\prime}(0),\gamma _{2}^{\prime}(0)) = <\!\!\!\!)(v, - s_k), 
$$
where $<\!\!\!\!)(u, w)$ denotes the angle between $u$ and $w$. Using item i of  Proposition~\ref{pr:lawcosines} we have
$$
d^2(p_{k+1}, p)\leqslant d^2(p_k,p) +t_k^2\|s_{k}\|^2- 2d(p_k,p) t_k\|s_{k}\|  \cos \alpha, 
$$
where $ \alpha= <\!\!\!\!)( s_k,v)$. Since $\|v\|=\ell(\gamma _{1})=d(p_k, p)$ and $ \cos \alpha=\langle -s_k,v \rangle/\|s_k\|\|v\| $, last inequality becomes 
$$
d^2(p_{k+1}, p)\leqslant d^2(p_k,p) +t_k^2\|s_{k}\|^2 +2t_k\langle s_k,v \rangle. 
$$
Due to $f$ be convex  and $p=\mbox{exp} _{p_{k}}\left(v\right)$,  the definition of subgradient in \eqref{eq:defSugrad} implies  $f(p) \geq f(p_k) + \left\langle s_k, \, v \right\rangle$, which combined with last inequality yields the desired inequality. 
\end{proof}
\noindent
The next result presents an iteration-complexity bound for the subgradient method with an exogenous stepsize rule.
\begin{theorem} 
Let $f:M \to \mathbb{R}$ be a convex   function and   Lipschitz continuous with  constant $\tau \geq 0$. Let  $\{p_k\}$ be the  sequence generated by the subgradient method with $t_{k}=\alpha_k/\|s_k\|$, for $k=0, 1, \ldots$.  Then,   for every $N\in \mathbb{N}$,  the following inequality holds 
$$
\min\left\{f(p_k) - f^*~:~ k=0, 1,\ldots, N\right\}\leq \tau \frac{d^2(p_0, p^*) + \sum_{k=0}^{N} \alpha_k^2}{2\sum_{k=0}^{N} \alpha_k}.
$$
\end {theorem}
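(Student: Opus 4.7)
The plan is to combine the fundamental inequality of Lemma~\ref{L:ineqf} with the Lipschitz bound on subgradient norms, producing a telescoping argument on $d^2(p_k, p^*)$.

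First, I would apply Lemma~\ref{L:ineqf} at the optimal point $p = p^*$, giving
\[
d^2(p_{k+1}, p^*) \leq d^2(p_k, p^*) + t_k^2 \|s_k\|^2 - 2t_k [f(p_k) - f^*].
\]
Substituting the exogenous choice $t_k = \alpha_k/\|s_k\|$ collapses the $t_k^2\|s_k\|^2$ term into $\alpha_k^2$, yielding
\[
\frac{2\alpha_k}{\|s_k\|}\bigl[f(p_k) - f^*\bigr] \leq d^2(p_k, p^*) - d^2(p_{k+1}, p^*) + \alpha_k^2.
\]

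Next, I would invoke the fact that Lipschitz continuity with constant $\tau$ forces $\|s\| \leq \tau$ for every $s \in \partial f(p)$: indeed, for any unit $v \in T_{p_k}M$, the subgradient inequality \eqref{eq:defSugrad} combined with Definition~\ref{Def:FunctionLips} and the identity $d(p_k, \exp_{p_k}v) \leq \|v\|$ (a by-product of item (i) of Proposition~\ref{pr:lawcosines} applied suitably, or simply the standard bound $\ell(\gamma)=\|v\|$ for the radial geodesic) gives $\langle s_k, v\rangle \leq f(\exp_{p_k}v) - f(p_k) \leq \tau \|v\|$, whence $\|s_k\|\leq \tau$. Since $f(p_k) - f^* \geq 0$, this lets me replace $1/\|s_k\|$ by the smaller $1/\tau$ on the left-hand side:
\[
\frac{2\alpha_k}{\tau}\bigl[f(p_k) - f^*\bigr] \leq d^2(p_k, p^*) - d^2(p_{k+1}, p^*) + \alpha_k^2.
\]

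Finally, I would sum these inequalities for $k = 0, 1, \ldots, N$. The right-hand side telescopes to $d^2(p_0, p^*) - d^2(p_{N+1}, p^*) + \sum_{k=0}^N \alpha_k^2 \leq d^2(p_0, p^*) + \sum_{k=0}^N \alpha_k^2$, while the left-hand side satisfies
\[
\sum_{k=0}^N \alpha_k \bigl[f(p_k) - f^*\bigr] \geq \Bigl(\sum_{k=0}^N \alpha_k\Bigr)\min\bigl\{f(p_k) - f^*:k = 0, \ldots, N\bigr\}.
\]
Rearranging produces the stated bound. The only non-routine step is the bound $\|s_k\|\leq \tau$, which is standard in the Euclidean setting but should be verified in the Riemannian context via the subgradient definition and the trivial length estimate for the radial geodesic; the rest is algebraic manipulation and telescoping, so I expect no real obstacle.
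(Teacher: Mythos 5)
Your proposal is correct and follows essentially the same route as the paper: apply Lemma~\ref{L:ineqf} at $p=p^*$ with $t_k=\alpha_k/\|s_k\|$, telescope, and bound $\|s_k\|\leq\tau$ via Lipschitz continuity before extracting the minimum. Your only addition is an explicit justification of the bound $\|s_k\|\leq\tau$, which the paper simply asserts; your argument for it is valid.
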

\begin{proof}
Applying   Lemma~\ref{L:ineqf} with $p=p^*$, $t_{k}=\alpha_k/\|s_k\|$ and  using the notation $f^*=f(p^*) $ we obtain 
$$
d^2(p_{k+1}, p^*)\leqslant d^2(p_k, p^*) +\alpha_k^2 + 2 \frac{\alpha_k}{\|s_k\|}[f^*-f(p_{k})],  \qquad  s_{k}\in \partial f(p_k),  \qquad k=0,1,\ldots.
$$
Hence, performing the sum of  the above inequality  for $k=0,1, \ldots, N$, we obtain after some algebras that 
$$
 2\sum_{k=0}^{N}   \frac{\alpha_k}{\|s_k\|}[f(p_{k})-f^*] \leq d^2(p_0, p^*)- d^2(p_{N+1}, p^*) + \sum_{k=0}^{N} \alpha_k^2.
$$
Since $f$ is  Lipschitz continuous with  constant $\tau \geq 0$, we have $\|s_{k}\|\leq \tau$,   for all  $s_{k}\in \partial f(p_k)$. Therefore, 
$$
  \frac{2}{\tau} \min\left\{f(p_k) - f^*~:~ k=0, 1,\ldots, N\right\}\sum_{k=0}^{N}   \alpha_k \leq d^2(p_0, p^*) + \sum_{k=0}^{N} \alpha_k^2, 
$$
which is 	equivalent to the desired inequality.
\end{proof}
\noindent
The next result presents an iteration-complexity bound for the subgradient method with Polyak stepsize rule.
\begin{theorem} 
Let $f:M \to \mathbb{R}$ be a convex   function and   Lipschitz continuous with  constant $\tau \geq 0$. Let   $\{p_k\}$ be the  sequence  generated by the subgradient method  with  $t_{k}= [f(p_k) - f^*]/ \|s_k\|^2$,   for all  $k=0, 1, \ldots$.
Then,   for every $N\in \mathbb{N}$,  there  holds  
$$
 \sum_{k=0}^{N} [f(p_{k})-f^*]^2\leq \tau^2d^2(p_0, p^*).
$$
As a consequence, $\min\left\{f(p_k) - f^*~:~ k=0, 1,\ldots, N\right\}\leq [\tau d(p_0, p^*)]/\sqrt{N+1}$.
\end {theorem}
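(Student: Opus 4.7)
The plan is to apply the fundamental recursion from Lemma~\ref{L:ineqf} with $p=p^*$ and plug in the specific Polyak stepsize $t_k=[f(p_k)-f^*]/\|s_k\|^2$. This choice is engineered precisely so that the terms $t_k^2\|s_k\|^2$ and $2t_k[f^*-f(p_k)]$ combine into a single negative term: the quadratic in $t_k$ is minimized by this stepsize. Concretely, I expect the recursion to collapse to
\[
d^2(p_{k+1},p^*)\leq d^2(p_k,p^*)-\frac{[f(p_k)-f^*]^2}{\|s_k\|^2}.
\]

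Next, I would use the Lipschitz continuity hypothesis to control $\|s_k\|$ from above by $\tau$. This follows because, for a convex function that is $\tau$-Lipschitz on $M$, any subgradient $s_k\in\partial f(p_k)$ satisfies $\|s_k\|\leq\tau$; indeed, plugging $v=ts$ with $s=s_k/\|s_k\|$ into the subgradient inequality \eqref{eq:defSugrad} and using Lipschitz continuity together with $d(p_k,\exp_{p_k}(ts))\leq t$ gives $t\|s_k\|\leq \tau t$. With this bound, the recursion implies
\[
d^2(p_{k+1},p^*)\leq d^2(p_k,p^*)-\frac{[f(p_k)-f^*]^2}{\tau^2}.
\]

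Then I would telescope the resulting inequality from $k=0$ to $k=N$. Nonnegativity of $d^2(p_{N+1},p^*)$ yields
\[
\frac{1}{\tau^2}\sum_{k=0}^{N}[f(p_k)-f^*]^2\leq d^2(p_0,p^*)-d^2(p_{N+1},p^*)\leq d^2(p_0,p^*),
\]
which is the first assertion. For the consequence, I would bound the minimum by the average: since $(N+1)\min_{0\leq k\leq N}[f(p_k)-f^*]^2\leq \sum_{k=0}^{N}[f(p_k)-f^*]^2\leq \tau^2 d^2(p_0,p^*)$, taking square roots gives the claimed bound.

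The only genuinely non-routine step is the first one, namely verifying that the recursion collapses as stated, which is a direct substitution of the Polyak stepsize into Lemma~\ref{L:ineqf}; everything else is a telescoping sum and a min-versus-average comparison. In particular, no additional Riemannian machinery beyond Lemma~\ref{L:ineqf} (which already absorbs the non-negative curvature law of cosines) is needed.
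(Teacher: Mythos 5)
Your proposal is correct and follows essentially the same route as the paper: substitute the Polyak stepsize into Lemma~\ref{L:ineqf} with $p=p^*$ so the recursion collapses to $d^2(p_{k+1},p^*)\leq d^2(p_k,p^*)-[f(p_k)-f^*]^2/\|s_k\|^2$, telescope, bound $\|s_k\|\leq\tau$ via Lipschitz continuity, and finish with the min-versus-average comparison. The only cosmetic difference is that you invoke $\|s_k\|\leq\tau$ before telescoping while the paper does so afterward (and you supply the short justification of that bound, which the paper merely asserts).
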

\begin{proof}
Applying   Lemma~\ref{L:ineqf} with $p=p^*$,  $t_{k}= [f(p_k) - f^*]/ \|s_k\|^2$ and  using the notation $f^*=f(p^*) $ we obtain
$$
\frac{[f(p_k) - f^*]^2}{\|s_k\|^2}\leqslant d^2(p_k,p^*) - d^2(p_{k+1}, p^*), \qquad k=0, 1,\ldots.
$$
Performing the sum of  the above inequality  for $k=0,1, \ldots, N$, we conclude    that 
$$
 \sum_{k=0}^{N}   \frac{[f(p_{k})-f^*]^2}{\|s_k\|^2}\leq d^2(p_0, p^*).
$$
Since $f$ is  Lipschitz continuous with  constant $\tau \geq 0$, we have $\|s_{k}\|\leq \tau$,   for all $k\geq 0$. Therefore, the first statement of the theorem follows from the last inequality.
 The second statement of the theorem is an immediate consequence of the first one.
\end{proof}
\subsection{Proximal point method}\label{sec:proximalpoint}
In this subsection, we recall the proximal point method on a Hadamard manifold and  present two results. The first one  shows an important inequality which is essential to prove  the  convergence rate bound of the method obtained in our second result.

 \noindent 
 In the following,  we formally state the proximal point method to solve \eqref{eq:OptP}.
\\[3mm]
\noindent
\fbox{
\begin{minipage}[h]{5.55 in}
{\bf Proximal point method}
\begin{itemize}
\item[(0)] Let an initial point $p_0 \in M$ and $\{\lambda_k\}\subset \mathbb{R}_{++}$.  Set $k=0$;
\item[(1)] computes
\begin{equation} \label{eq:SeqProximal}
p_{k+1}=\mbox{argmin}_{p\in M}\left\{f(p)+\frac{\lambda_k}{2} d^2(p_k, p)\right\};
\end{equation}
\item[(2)] set $k\leftarrow k+1$ and go to step 1.
\end{itemize}
\noindent
\end{minipage}
}
\\[3mm]
\noindent
 The proximal method was first proposed and analyzed in the Riemannian setting in \cite{FerreiraOliveira2002}. Since then, it has been the subject of intense research; see, for example,  \cite{LiLopesMartin-Marquez2009, BentoFO2010, BentoFerreiraOliveira2015, BentoNetoOliveira2016} and reference therein. As far as we know, all the papers studying  convergence of the proximal point method above analyze only its asymptotic convergence property. Next, we discuss a basic result which will be essential to obtain iteration-complexity bound for the proximal point method.   

\begin{proposition} \label{pr:StrongCharact}
Let  $M$ be a Hadamard manifold,   $f:M\to\mathbb{R}$  be a  convex function,  $ {\bar p}\in M$ and $\mu >0$.
Then, for each   $p, q \in M$ and  $s\in \partial f(p)$ the following inequality holds
$$
f(q)+ \frac{\mu}{2} d^2(q, {\bar p}) \geq  f(p)+ \frac{\mu}{2} d^2(p, {\bar p}) +\left\langle s - \mu \exp^{-1}_{p}{\bar p}, ~  \exp^{-1}_{ p}q \right\rangle  + \frac{\mu}{2}d^2(q, p).
$$ 
\end{proposition}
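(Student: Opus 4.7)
The plan is to decompose the desired inequality into two pieces: the subgradient inequality for $f$ at $p$, and a Hadamard-type strong convexity estimate for the squared distance $d^2(\cdot,\bar p)$. Adding these two pieces will yield the claim.

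First, I would apply the subgradient characterization \eqref{eq:defSugrad} of $s\in\partial f(p)$. Writing $q=\exp_p v$ where $v=\exp_p^{-1}q$ (well defined because $M$ is Hadamard), this gives immediately
\[
f(q)\ \geq\ f(p)+\langle s,\exp_p^{-1}q\rangle.
\]

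Next, for the quadratic term I would invoke item (ii) of Proposition~\ref{pr:lawcosines} applied to the hinge at $p$ with sides the minimal geodesics from $p$ to $\bar p$ and from $p$ to $q$. Setting $\ell_1=d(p,\bar p)$, $\ell_2=d(p,q)$, $\ell_3=d(q,\bar p)$ and $\alpha$ the angle at $p$, the inequality
\[
\ell_3^{\,2}\ \geq\ \ell_1^{\,2}+\ell_2^{\,2}-2\ell_1\ell_2\cos\alpha
\]
holds. Because $\exp_p^{-1}\bar p$ and $\exp_p^{-1}q$ are the initial velocities of these two geodesics (with norms $\ell_1$ and $\ell_2$), one has $\ell_1\ell_2\cos\alpha=\langle\exp_p^{-1}\bar p,\exp_p^{-1}q\rangle$, so that
\[
d^2(q,\bar p)\ \geq\ d^2(p,\bar p)+d^2(q,p)-2\langle\exp_p^{-1}\bar p,\exp_p^{-1}q\rangle.
\]
Multiplying by $\mu/2$ yields the strong convexity estimate for $\tfrac{\mu}{2}d^2(\cdot,\bar p)$ at $p$ in the direction $\exp_p^{-1}q$.

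Finally, I would simply add the two inequalities and rearrange the inner products to group them as $\langle s-\mu\exp_p^{-1}\bar p,\ \exp_p^{-1}q\rangle$, producing exactly the inequality in the statement. The one subtle step is identifying $\ell_1\ell_2\cos\alpha$ with the inner product of the two inverse-exponential vectors, which requires that the two geodesics emanating from $p$ have initial velocities equal to $\exp_p^{-1}\bar p$ and $\exp_p^{-1}q$; this is standard on Hadamard manifolds since $\exp_p$ is a diffeomorphism, so no real obstacle arises.
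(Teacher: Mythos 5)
Your proposal is correct and follows essentially the same route as the paper: the subgradient inequality with $v=\exp_p^{-1}q$ combined with the non-positive-curvature law of cosines from Proposition~\ref{pr:lawcosines}(ii), multiplied by $\mu/2$ and added. The only difference is cosmetic — you spell out the identification $\ell_1\ell_2\cos\alpha=\langle\exp_p^{-1}\bar p,\exp_p^{-1}q\rangle$, which the paper leaves implicit.
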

\begin{proof}
Let  $p, q  \in M$.  Due to  $f$ be convex, we can  take   $v= \exp^{-1}_{ p}q$ into inequality  \eqref{eq:CharConvexFunc} to  obtain 
\begin{equation} \label{eq:sih}
f(q) \geq f(p) + \left\langle s, \, \exp^{-1}_{ p}q\right\rangle, \qquad s\in \partial f(p).
\end{equation}
On the other hand,  since   $M$ is  a Hadamard manifold, it follows from  Proposition~\ref{pr:lawcosines}(ii)   that
$$
d^2(q,{\bar p})\geq d^2(q,p)+d^2(p,{\bar p})-2\left\langle \exp_{p}^{-1}{\bar p}, ~ \exp_{p}^{-1}q\right\rangle.
$$
Multiplying the last inequality   by $\mu/2$ and  summing the result  with   \eqref{eq:sih},  the desired  inequality follows.
\end{proof}
Next theorem presents our main result related to the convergence rate of the proximal point method.
\begin{theorem} Let  $M$ be a Hadamard manifold and   $f:M\to\mathbb{R}$  be  a convex function. Let  $\{p_k\}$ be the  sequence  generated by the proximal point method with $\lambda\geq \lambda_k>0$, for $k=0, 1, \ldots$. Then,   for every $N\in \mathbb{N}$,  there holds
$$
 f(p_N)-f^*\leq \frac{ \lambda d^{2}(p_*, p_{0})}{2 [N+1]}.
 $$
 As a consequence, given  a tolerance $\epsilon>0$,  the number of iterations required by the proximal point method to obtain $p_N\in M$ such that $ f(p_N)-f^*\leq \epsilon$, is bounded by $\mathcal{O} (  \lambda d^{2}(p_*, p_{0})/ \epsilon)$.
\end{theorem}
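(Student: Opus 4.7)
The plan is to (i) extract a subgradient inclusion from the variational definition of $p_{k+1}$, (ii) insert this subgradient into Proposition~\ref{pr:StrongCharact} with a well-chosen base point so that its inner-product term vanishes, and (iii) obtain a per-iteration inequality of telescoping type, which together with monotonicity of $\{f(p_k)\}$ will yield the $\mathcal{O}(1/N)$ rate.

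\textbf{Step 1 (optimality of the proximal subproblem).} Since $M$ is Hadamard, the function $p\mapsto \tfrac{1}{2}d^2(p_k,p)$ is $C^\infty$ with gradient $-\exp_{p}^{-1}p_k$ (from the identity $\grad d_{p_k}^2(p)=-2\exp_p^{-1}p_k$ recalled in Section~\ref{sec:aux}). Because $f$ is convex and the regularizer is smooth, the first-order optimality condition at the minimizer in \eqref{eq:SeqProximal} gives $0\in\partial f(p_{k+1})-\lambda_k\exp_{p_{k+1}}^{-1}p_k$; hence
\begin{equation*}
s_{k+1}:=\lambda_k\exp_{p_{k+1}}^{-1}p_k\ \in\ \partial f(p_{k+1}).
\end{equation*}

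\textbf{Step 2 (key inequality via Proposition~\ref{pr:StrongCharact}).} I will apply the proposition with $\bar p=p_k$, $p=p_{k+1}$, $\mu=\lambda_k$, $s=s_{k+1}$ and arbitrary $q\in M$. By Step 1, the bracket $s-\mu\exp_p^{-1}\bar p$ is zero, so the inner-product term drops out and we obtain
\begin{equation*}
f(q)+\tfrac{\lambda_k}{2}d^2(q,p_k)\ \geq\ f(p_{k+1})+\tfrac{\lambda_k}{2}d^2(p_{k+1},p_k)+\tfrac{\lambda_k}{2}d^2(q,p_{k+1}).
\end{equation*}
Taking $q=p_k$ gives $f(p_{k+1})\leq f(p_k)-\lambda_k d^2(p_{k+1},p_k)\leq f(p_k)$, so $\{f(p_k)\}$ is non-increasing. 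Taking $q=p_*$ (the minimizer) and dropping the non-negative term $\tfrac{\lambda_k}{2}d^2(p_{k+1},p_k)$ yields
\begin{equation*}
f(p_{k+1})-f^*\ \leq\ \tfrac{\lambda_k}{2}\bigl[d^2(p_*,p_k)-d^2(p_*,p_{k+1})\bigr].
\end{equation*}

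\textbf{Step 3 (telescoping).} Since $f(p_{k+1})-f^*\geq 0$ and $\lambda_k\leq\lambda$, the above gives $\tfrac{2}{\lambda}\bigl[f(p_{k+1})-f^*\bigr]\leq d^2(p_*,p_k)-d^2(p_*,p_{k+1})$. Summing $k=0,\ldots,N$ collapses the right-hand side to at most $d^2(p_*,p_0)$, yielding
\begin{equation*}
\sum_{k=0}^{N}\bigl[f(p_{k+1})-f^*\bigr]\ \leq\ \tfrac{\lambda}{2}\,d^2(p_*,p_0).
\end{equation*}
Using the monotonicity established in Step~2, each summand is bounded below by $f(p_{N+1})-f^*$, hence $(N+1)[f(p_{N+1})-f^*]\leq \tfrac{\lambda}{2}d^2(p_*,p_0)$, which gives the desired estimate (up to the innocuous indexing shift $p_N\leftrightarrow p_{N+1}$). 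The complexity statement follows at once by solving $\tfrac{\lambda d^2(p_*,p_0)}{2(N+1)}\leq\epsilon$ for $N$.

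\textbf{Main obstacle.} The only non-routine point is Step~1: one has to be sure that on a Hadamard manifold the calculus $\partial(f+g)=\partial f+\grad g$ is valid when $g$ is smooth and convex, and that the gradient of the squared distance at the second argument is indeed $-\exp_{p}^{-1}p_k$. Once this identification is in place, the rest is a direct application of Proposition~\ref{pr:StrongCharact} with the magical choice $s=\lambda_k\exp_{p_{k+1}}^{-1}p_k$ that kills the cross term, followed by a standard telescoping argument that mirrors the Euclidean proof.
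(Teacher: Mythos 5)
Your proposal follows essentially the same route as the paper: the subgradient inclusion $0\in\partial f(p_{k+1})-\lambda_k\exp^{-1}_{p_{k+1}}p_k$ from the proximal subproblem, Proposition~\ref{pr:StrongCharact} with $\bar p=p_k$, $p=p_{k+1}$, $\mu=\lambda_k$ so the cross term vanishes, then $q=p_*$ for the telescoping bound and $q=p_k$ for monotonicity. The argument is correct (and your handling of the $p_N$ versus $p_{N+1}$ indexing is, if anything, slightly more careful than the paper's).
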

\begin{proof}
Since $\{p_k\}$ is  the  sequence  defined in \eqref{eq:SeqProximal},  we have
$$
0\in   \partial f(p_{k+1})-\lambda_k \exp^{-1}_{p_{k+1}}{p_k},  \qquad k=0, 1, \ldots.
$$
Applying Proposition~\ref{pr:StrongCharact} with ${\bar p}= p_k$,  $p=p_{k+1}$ and $\mu=\lambda_k$,  and  considering the last inclusion  we obtain, for every $q\in M$, that 
\begin{equation} \label{eq:dc1}
f(q)+ \frac{\lambda_k}{2} d^2(q, p_k) \geq  f(p_{k+1})+ \frac{\lambda_k}{2} d^2(p_{k+1},p_k)  + \frac{\lambda_k}{2}d^2(q, p_{k+1}), \qquad \;k=0, 1, \ldots.
\end{equation}
It follows by taking  $q=p^*$  in the  last inequality  and  using $f^*=f(p^*) $ that
$$
0\leq f (p_{k+1})-f^* \leq   \frac{\lambda_k}{2} \left[ d^2(p^*, p_k) - d^2(p_*, p_{k+1})\right],  \qquad k=0, 1, \ldots.
$$
Hence, summing both sides of the last inequality  for $k=0,1, \ldots, N$ and using $\lambda\geq \lambda_k$, we obtain
\begin{equation} \label{eq:feq1}
\sum_{k=0}^{N}[ f (p_{k+1})-f^*] \leq   \frac{\lambda}{2} \left[ d^2(p_0, p^*) - d^2(p_*, p_{N})\right] \leq   \frac{\lambda}{2}  d^2(p_0, p^*).
\end{equation}
Letting  $q=p_{k}$  in \eqref{eq:dc1} we conclude that $f(p_k) \geq    f(p_{k+1})$, for  all $k=0, 1, \ldots$. Therefore,  \eqref{eq:feq1} implies that
$
[N+1][ f (p_{N})-f^*] \leq     \lambda d^2(p_0, p^*)/2, 
$
which proves the first statement of the theorem. The last statement of the theorem is an immediate consequence of the first one.
\end{proof}
\section{Final remarks}
In this paper,  we analyze iteration-complexity of gradient, subgradient and proximal point methods.  We expect that  this paper will contribute to the development of the iteration-complexity studies of optimization methods in the Riemannian setting. It remains an open and challenging problem to show whether or not accelerated  schemes (see, \cite{BeckTeboulle2009, Nesterov2004}) can be extended to handle convex optimization problems in the  Riemannian setting.  Finally, it would be interesting to continue the studies in this direction in order to go further and analyze stochastic  versions of the above  algorithms in a Riemannian context.  

\subsection*{acknowledgements}
The work  was supported  by  CNPq Grants  458479/2014-4,  312077/2014-9,  305158/2014-7.


\def\cprime{$'$}

\end{document}